\numberwithin{equation}{section}
 \newtheorem{theorem}{Theorem}[section]
\theoremstyle{definition}
\newcommand{\e}{\end{document}}
\begin{document}

\thispagestyle{empty}

\author{
{{\bf  M. A. El-Damcese$^1$, Abdelfattah Mustafa$^{2,}$\footnote{Corresponding author: abdelfatah\_mustafa@yahoo.com}, }} \\
{{ \bf B. S. El-Desouky$^2$ \; and   M. E. Mustafa$^2$} }
 { }\vspace{.2cm}\\
 \small \it $^{1}$Tanta University, Faculty of Science, Mathematics Department, Egypt.\\
 \small \it $^{2}$Department of Mathematics, Faculty of Science, Mansoura University, Mansoura 35516, Egypt.
}

\title{The Odd Generalized Exponential Linear Failure Rate Distribution}

\date{}

\maketitle
\small \pagestyle{myheadings}
        \markboth{{\scriptsize The Odd Generalized Exponential Linear Failure Rate Distribution }}
        {{\scriptsize {M.A. El-Damcese, Abdelfattah Mustafa, B.S. El-Desouky and   M.E. Mustafa}}}

\hrule \vskip 8pt
\begin{abstract}
In this paper we propose a new lifetime model, called the odd generalized exponential linear failure rate distribution. Some statistical properties of the proposed distribution such as the moments, the quantiles, the median, and the mode are investigated. The method of maximum likelihood is used for estimating the model parameters. An applications to real data is carried out to illustrate that the new distribution is more flexible and effective than other popular distributions in modeling lifetime data.
\end{abstract}

\noindent
{\bf Keywords:}
{\it Hazard function; Moments; Maximum likelihood estimation; Linear failure rate distribution.}

\noindent
{\bf 2010 MSC:} {\em  60E05,  62F10, 62F12, 62N02, 62N05.}

\section{Introduction}
Some distributions such as the exponential (E), Rayleigh (R), generalized exponential (GE), and linear exponential (LE) are used for modelling the lifetime data in reliability. These distributions have several desirable properties and satisfactory interpretations which enable them to be used frequently. It is well-known that the exponential distribution can have only constant hazard rate function whereas, Rayleigh, linear failure rate, and generalized exponential distributions can have only monotone (increasing in case of linear failure rate distribution and increasing/decreasing in case of generalized exponential distribution) failure rate functions. However, the above distributions sometimes have some respective drawbacks in analyzing lifetime data. Gupta and Kundu \cite{5} proposed a generalization of the exponential distribution named as Generalized Exponential (GE) distribution. The two-parameter GE distribution with parameters $ \alpha > 0$ and $\beta > 0$, has the following distribution function
\begin{equation} \label{1-1}
F(x)=\left[ 1-e^{-\alpha x}\right] ^{\beta },\;x>0,\;\text{\/}\alpha >0,\;%
\text{\/}\beta >0.
\end{equation}

\noindent
The linear exponential (LE) distribution is also known as the Linear Failure Rate (LFR) distribution, having exponential and Rayleigh distributions as special cases, Bain \cite{2}. The two-parameter LE distribution with parameters $ a > 0$ and $ b> 0$, has the following distribution function
\begin{equation} \label{1-2}
F(x)=1-e^{-ax-\frac{b}{2}x^{2}},\;x>0,\;\text{\/}a>0,\;\text{\/}b>0.
\end{equation}

\noindent
Sarhan and Kundu \cite{10} presented a three-parameter generalized linear failure rate (GLFR) distribution by exponentiating the LFR distribution as was done for the exponentiated Weibull distribution by Mudholkar et al. \cite{8}. The exponentiation introduces an extra shape parameter in the model, which may yield more flexibility in the shape of the probability density function (pdf) and hazard function. The distribution  function of the generalized linear failure rate (GLFR) distribution is given as
\begin{equation} \label{1-3}
F(x)=\left[ 1-e^{-ax-\frac{b}{2}x^{2}}\right] ^{\beta },\;x>0,\;\text{\/}%
a>0,\;\text{\/}b>0,\;\text{\/}\beta >0.
\end{equation}

\noindent
It is observed that the GLFR distribution has decreasing or unimodal pdf and it can have increasing, decreasing, and bathtub-shaped hazard functions. Another important characteristic of GLFR distribution is that it contains, as special sub-models, the generalized exponential (GE), generalized Rayleigh (GR), Linear failure rate (LFR), exponential (E), and Rayleigh (R) distributions, \cite{4,10}.  Jamkhaneh \cite{6} introduced four-parameter distribution called the modified generalized linear failure rate (MGLFR) distribution. Mahmoud and Alam \cite{9} proposed a generalization of linear exponential distribution called the generalized linear exponential (GLE) distribution. Anew four-parameter generalization of the linear failure rate (LFR) distribution which is called Beta-linear failure rate (BLFR) distribution is introduced by Jafari and Mahmoudi \cite{7}. The BLFR distribution is quite flexible and can be used effectively in modeling survival data and reliability problems. It can have a constant, decreasing, increasing, upside-down bathtub (unimodal) and bathtub-shaped failure rate function depending on its parameters, and it also includes some well-known lifetime distributions as special sub-models. Another generalized version of linear exponential distribution introduced by Yuzhu tiana et al. \cite{11} called the new generalized linear exponential (NGLE) distribution and discuss some of its properties, it also includes some well-known lifetime distributions as special sub-models. Yuzhu tiana et al. \cite{12} also presented another generalization of linear exponential distribution called the transmuted linear exponential (TLE) distribution. Recently, a new class of univariate continuous distributions called the odd generalized exponential (OGE) class introduced by \cite{3,13}. This class is flexible because of the hazard rate shapes could be increasing, decreasing, bathtub and upside down bathtub. The odd generalized exponential (OGE) class is defined as follows. If $G(x)$, $x>0$ is cumulative distribution function (cdf) of a random variable X, then the corresponding survival function is $\overline{ G} (x) =1-G(x)$ and the probability density function is $g(x)$, then we define the cdf of the OGE class by replacing $x$ in the distribution function of generalized exponential (GE) distribution given in equation (\ref{1-1}) by $\frac{G(x)}{\overline{G}(x)}$ leading to
\begin{equation} \label{1-4}
F(x)=\left[ 1-e^{-\alpha \frac{G(x)}{\overline{G}(x)} } \right]
^{\beta }, \;x > 0,\; \alpha >0, \; \beta >0.
\end{equation}

\noindent
The probability density function corresponding to (\ref{1-4}) is given by
\begin{equation} \label{1-5}
f(x)=\frac{\alpha \beta g(x)}{\overline{ G}{(x)}^2} \;e^{-\alpha \frac{ G(x)}{\overline{G}(x)}}
\left[ 1-e^{-\alpha \frac{G(x)}{\overline{G}(x)}}\right]^{\beta -1}, \; x>0,\; \alpha >0,\; \beta >0.
\end{equation}

\noindent
In this article we present a new distribution depending on Linear Failure Rate distribution called the Odd Generalized Exponential-Linear Failure Rate (OGE-LFR) distribution by using the class of univariate distributions defined above.\\

\noindent
This paper is organized as follows. In Section 2 we define the cumulative distribution function, density function, reliability function, hazard function and the reversed hazard function of the odd generalized exponential-linear failure rate (OGE-LFR) distribution. In Section 3 we study some different properties of (OGE-LFR) distribution include, the quantile function, median, mode, and the moments. Section 4 discusses the distribution of the order statistics for (OGE-LFR) distribution. Moreover, maximum likelihood estimation of the parameters is determined in Section 5. Finally, an application of OGE-LFR distribution using a real data set is presented in Section 6.


\section{The OGE-LFR distribution}
\subsection{OGE-LFR specifications}
In this subsection we present a new four parameters distribution called Odd Generalized Exponential-Linear Failure Rate (OGE-LFR) distribution with parameters $\alpha ,a,b,$ and $ \beta$  written as OGE-LFR($\Psi)$, where the vector $\Psi $ is defined in the form $\Psi =( \alpha,a, b, \beta )$.\\
A random variable $X$ is said to have OGE-LFR with parameters $\alpha ,a,b,$  and $\beta$ if its cumulative distribution function (cdf) given as
\begin{equation} \label{2-1}
F(x)=\left[ 1-e^{-\alpha \left( e^{ax+\frac{b}{2}x^2}-1\right) }\right]^{\beta }, \; x>0,\; \alpha, a, b, \beta >0.
\end{equation}

\noindent
The corresponding pdf has the form
\begin{equation} \label{2-2}
f(x)=\alpha \beta (a+bx) e^{ax+\frac{b}{2}x^{2}} e^{-\alpha \left( e^{ax+ \frac{b}{2}x^{2}}-1\right) }\left[ 1-e^{-\alpha \left(
e^{ax+\frac{b}{2}x^{2}}-1\right) }\right]^{\beta -1} ,
\end{equation}
where $\;x>0,\; \alpha, a, b, \beta >0.$

\subsection{Survival and hazard functions}
If a random variable X has cdf in (\ref{2-1}), then the corresponding
survival function is given by
\begin{equation} \label{2-3}
S(x)=1-\left[ 1-e^{-\alpha \left( e^{ax+\frac{b}{2}x^{2}}-1\right) }\right]
^{\beta }.
\end{equation}

\noindent
The hazard function of OGE-LFR($\Psi $) is defined as follow
\begin{equation} \label{2-4}
h(x)=\frac{\alpha \beta (a+bx) e^{ax+\frac{b}{2}x^2} e^{-\alpha \left(e^{ax+\frac{b}{2}x^{2}}-1\right) } \left[ 1-e^{-\alpha \left( e^{ax+\frac{b}{2}x^{2}}-1\right) }\right]^{\beta -1}}{1-\left[ 1-e^{-\alpha \left( e^{ax+\frac{b}{2}x^2}-1\right) }\right] ^{\beta }}.
\end{equation}
Also the reversed hazard function of OGE-LFR($\Psi $) is given as follow
\begin{equation} \label{2-5}
r(x)=\frac{\alpha \beta (a+bx) e^{ax+\frac{b}{2}x^2}}{e^{\alpha \left(
e^{ax+\frac{b}{2}x^2}-1\right) }-1}.
\end{equation}

\begin{figure}[htp]
\centering
\includegraphics[width=0.47\textwidth]{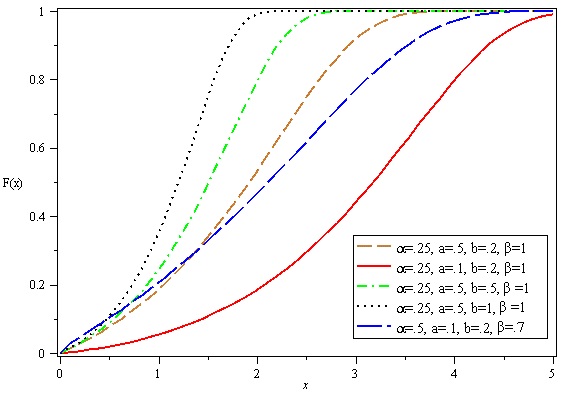}
\includegraphics[width=0.49\textwidth]{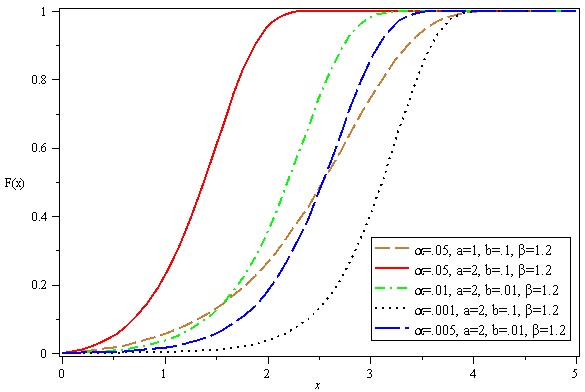}
\caption{The CDF of various OGE-LFR distribution for different values of parameters.}
\label{fig1}
\end{figure}

\begin{figure}[htp]
\centering
\includegraphics[width=0.57\textwidth]{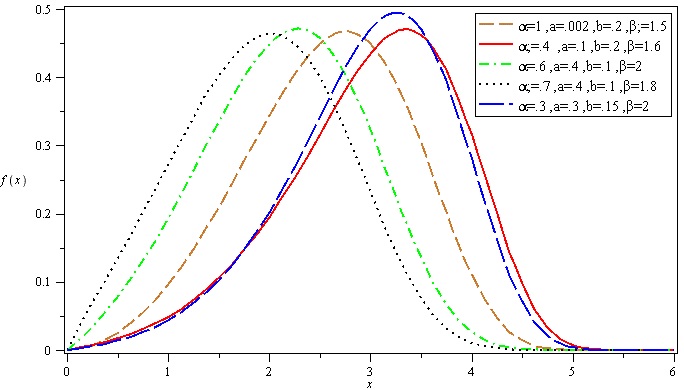}
\includegraphics[width=0.38\textwidth]{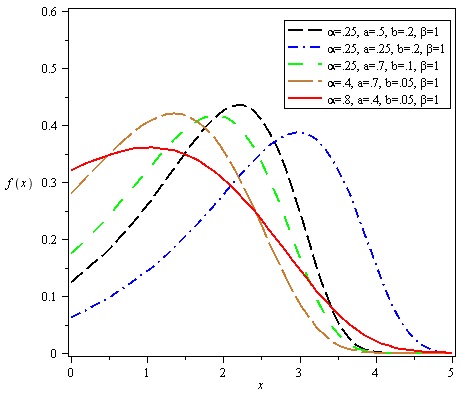}
\caption{The PDF of various OGE-LFR distribution for different values of parameters.}
\label{fig2}
\end{figure}

\begin{figure}[htp]
\centering
\includegraphics[width=0.49\textwidth]{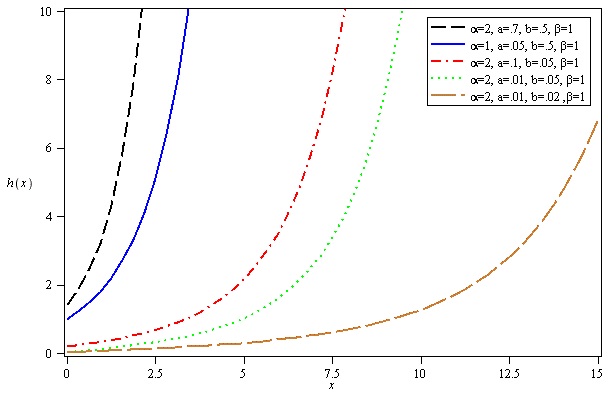}
\includegraphics[width=0.47\textwidth]{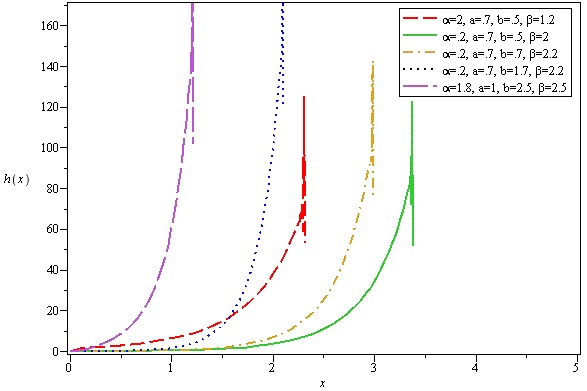}
\caption{The Hazard function of various OGE-LFR distribution for different values of parameters.}
\label{fig3}
\end{figure}

\begin{figure}[htp]
\centering
\includegraphics[width=0.49\textwidth]{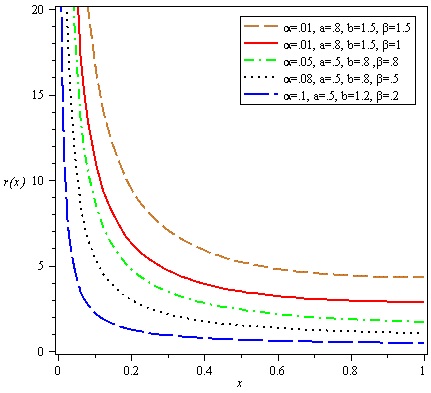}
\includegraphics[width=0.49\textwidth]{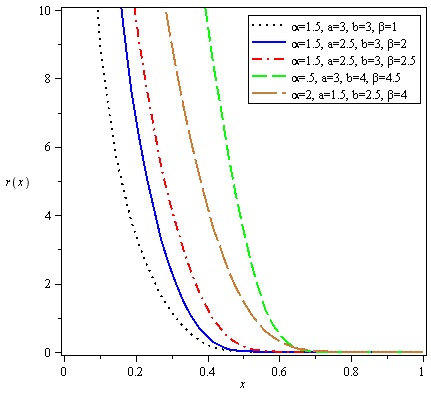}
\caption{The Reversed hazard function of various OGE-LFR distribution for different values of parameters.}
\label{fig4}
\end{figure}

\section{Statistical Properties}
This section is devoted for studying some statistical properties for the odd generalized exponential-linear failure rate (OGE-LFR), specifically quantile function, median and the moments.

\subsection{Quantile, Median and Mode}
The quantile of the OGE-LFR($\Psi $) distribution is simply the solution of the following equation, with respect to $x_q, \; 0 < q < 1$
\begin{equation} \label{3-1}
q=F(x_q)=\left[ 1-e^{-\alpha \left( e^{ax_{q}+\frac{b}{2}x_q^2}-1 \right) }\right]^{\beta }.
\end{equation}

\noindent
By solving equation (\ref{3-1}), we obtain $x_q $ as follow
\[
x_q =\frac{-a \pm \sqrt{a^2+2 b\ln\left\{ \ln \left[ \frac{1}{\left(
1-q^{\frac{1}{b}}\right) ^{\frac{1}{\alpha }}}\right] \right\} }}{b}.
\]
Since the quantile $x_q$ is positive, then we obtain the quantile as follow
\begin{equation} \label{3-3}
x_q=\frac{-a+ \sqrt{a^2+2b\ln \left\{ \ln \left[ \frac{1}{\left( 1-q^{\frac{1}{b}}\right)^{\frac{1}{\alpha }}}\right] \right\} }}{b}.
\end{equation}

\noindent
The median can be derived from (\ref{3-3}) be setting $q=\frac{1}{2}$. That is, the median is given by the following relation
\begin{equation} \label{3-4}
Med=\frac{-a+\sqrt{a^2+2b\ln \left\{ \ln \left[ \frac{1}{\left( 1-(\frac{1
}{2})^{\frac{1}{b}}\right)^{\frac{1}{\alpha }}}\right] \right\} }}{b}.
\end{equation}

\noindent
Moreover, the mode of the OGE-LFR($\Psi $) distribution can be obtained by deriving its pdf with respect to $x$ and equal it to zero. Thus the mode of the OGE-LFR($\Psi $) distribution can be obtained as a nonnegative solution of the following nonlinear equation
\begin{equation} \label{3-5a}
1+b(a+bx)^{-2}-\alpha e^{ax+\frac{b}{2}x^2}\left[ 1-\frac{\beta -1}{e^{\alpha \left( e^{ax+\frac{b}{2}x^{2}}-1\right) }-1}\right] =0.
\end{equation}

\noindent
It is not possible to get an explicit solution of (\ref{3-5a}) in the general case. Numerical methods should be used such as fixed-point or bisection method to solve it.

\subsection{The moments}
In this subsection, we will derive the rth moments of the OGE-LFR($\Psi $) distribution as infinite series expansion.
\begin{theorem}
The rth moment of a random variable $X \sim $OGE-LFR($\Psi $), where $\Psi =(\alpha, a, b, \beta)$ is given by
\begin{eqnarray*}
\mu _{r}^{\acute{}} & = &\sum^{\infty }_{i=0} \sum_{j=0}^{\infty }
\sum^{j}_{k=0} \sum_{L=0}^{\infty } \tbinom{\beta -1}{i} \tbinom{j}{k}
\left( -1\right) ^{i+j+k}\frac{ \beta \alpha ^{j+1}b^{L}(i+1)^{j}}{j!L!2^{L}}
\times \\
&&
\left[ \frac{(r+2L)!}{a^{r+2L}(j-k+1)^{r+2L+1}}+\frac{b(r+2L+1)!}{%
a^{r+2L+2}(j-k+1)^{r+2L+2}}\right].
\end{eqnarray*}
\end{theorem}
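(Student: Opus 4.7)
The plan is to compute $\mu_r' = \int_0^\infty x^r f(x)\,dx$ by expanding the density (\ref{2-2}) as a quadruple series whose generic summand integrates to a gamma integral against $x^r(a+bx)$.

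First, I would peel off the $(\beta-1)$-power via the generalized binomial theorem,
\begin{equation*}
[1-e^{-\alpha(e^{ax+bx^2/2}-1)}]^{\beta-1} = \sum_{i=0}^{\infty}\binom{\beta-1}{i}(-1)^i e^{-i\alpha(e^{ax+bx^2/2}-1)},
\end{equation*}
and merge the resulting exponential with the $e^{-\alpha(e^{ax+bx^2/2}-1)}$ already in $f$, giving a single factor $e^{-(i+1)\alpha(e^{ax+bx^2/2}-1)}$.

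Second, I would Taylor-expand this factor in $j$,
\begin{equation*}
e^{-(i+1)\alpha(e^{ax+bx^2/2}-1)} = \sum_{j=0}^{\infty}\frac{(-1)^j(i+1)^j\alpha^j}{j!}(e^{ax+bx^2/2}-1)^j,
\end{equation*}
and then apply the ordinary binomial theorem to $(e^{ax+bx^2/2}-1)^j = \sum_{k=0}^{j}\binom{j}{k}(-1)^k e^{(j-k)(ax+bx^2/2)}$. Absorbing the outer $e^{ax+bx^2/2}$ factor from (\ref{2-2}) collapses the exponents to $e^{(j-k+1)(ax+bx^2/2)}$ and deposits exactly the overall sign $(-1)^{i+j+k}$ and the constant $\beta\alpha^{j+1}(i+1)^j/j!$ claimed in the theorem.

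Third, I would split $e^{(j-k+1)(ax+bx^2/2)} = e^{(j-k+1)ax}\,e^{(j-k+1)bx^2/2}$ and Taylor-expand the quadratic exponential as $\sum_{L=0}^\infty b^L x^{2L}/(L!\,2^L)$, producing the factor $b^L/(L!\,2^L)$ with powers $x^{2L}$. Swapping the four series with the integral reduces the task to
\begin{equation*}
\int_0^\infty x^{r+2L}(a+bx)\,e^{-a(j-k+1)x}\,dx = \frac{(r+2L)!}{a^{r+2L}(j-k+1)^{r+2L+1}} + \frac{b(r+2L+1)!}{a^{r+2L+2}(j-k+1)^{r+2L+2}},
\end{equation*}
which follows by splitting $(a+bx)$ into its two monomial pieces and applying $\int_0^\infty x^n e^{-cx}dx = n!/c^{n+1}$ with $c = a(j-k+1)$. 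Collecting the four summations then gives the stated formula.

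The main obstacle is the careful bookkeeping of signs and powers across the four indices $i,j,k,L$ so that the prefactor $(-1)^{i+j+k}\beta\alpha^{j+1}b^L(i+1)^j/(j!\,L!\,2^L)$ falls out correctly; a secondary point is the formal interchange of the quadruple series with the integral, which is the standard manipulation used in deriving series representations of moments for generalized lifetime distributions and can be justified by the absolute convergence of the binomial and exponential expansions on their natural domains.
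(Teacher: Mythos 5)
Your proposal follows the paper's proof essentially verbatim: the same four expansions (generalized binomial in $i$, exponential series in $j$, ordinary binomial in $k$, series expansion of the quadratic exponential in $L$) followed by termwise gamma-type integration. Note, however, that you inherit and slightly paper over the paper's own lapses: the expansions actually produce $e^{+a(j-k+1)x}$, not the $e^{-a(j-k+1)x}$ you write in the final integral, so the termwise integrals as literally derived diverge and the interchange of summation and integration cannot be justified as stated; moreover the expansion of $e^{(j-k+1)bx^{2}/2}$ should carry a factor $(j-k+1)^{L}$ that your prefactor (and the theorem statement) omit, even though the paper's own final display retains it.
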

\begin{proof}
The rth moment of a random variable $X$ with pdf $f(x)$ is defined by

\begin{equation} \label{3-5}
\mu_{r}^{{\acute{}}}=\int^{\infty }_{0} x^{r} f(x) dx.
\end{equation}

\noindent
Substituting from (\ref{2-2}) into (\ref{3-5}), we obtain
\begin{equation} \label{3-6}
\mu _{r}^{{\acute{}}}=\int^{\infty }_{0}x^{r} \alpha \beta (a+bx)e^{ax+\frac{b}{2}x^2}e^{-\alpha \left( e^{ax+\frac{b}{2}x^{2}}-1\right) }\left[
1-e^{-\alpha \left(e^{ax+\frac{b}{2}x^{2}}-1\right) }\right]^{\beta -1}dx.
\end{equation}

\noindent
Sinec $ 0< \left[ 1-e^{-\alpha \left( e^{ax+\frac{b}{2}x^{2}}-1\right) }\right] < 1 $ for $ x>0,$ we obtain
\begin{equation} \label{3-7}
\left[ 1-e^{-\alpha \left( e^{ax+\frac{b}{2}x^{2}}-1\right) }\right]^{\beta
-1} = \sum_{i=0}^{\infty } \tbinom{\beta -1}{i}\left(
-1\right)^{i}e^{-\alpha i\left( e^{ax+\frac{b}{2}x^{2}}-1\right)}.
\end{equation}

\noindent
Substituting from (\ref{3-7}) into (\ref{3-6}), we get
\begin{equation*}
\mu _{r}^{{\acute{}}}= \sum_{i=0}^{\infty } \tbinom{\beta -1}{i}\left(
-1\right)^{i} \alpha \beta \int^{\infty }_{0}
x^{r}(a+bx)e^{ax+\frac{b}{2}x^{2}}e^{-\alpha (i+1)\left( e^{ax+\frac{b}{2}%
x^{2}}-1\right)}dx.
\end{equation*}

\noindent
Using series expansion of $e^{-\alpha (i+1)\left( e^{ax+\frac{b}{2}x^{2}}-1
\right) },$ we obtain
\begin{equation*}
\mu _{r}^{{\acute{}}}=\sum_{i=0}^{\infty } \sum_{j=0}^{\infty }
\tbinom{\beta -1}{i} \left( -1\right)^{i+j} \frac{\beta \alpha^{j+1} (i+1)^{j}}{j!} \int^{\infty }_{0} x^{r}(a+bx)e^{ax+ \frac{b}{2}x^{2}}\left[ e^{ax+\frac{b}{2}x^{2}}-1\right] ^{j}dx.
\end{equation*}

\noindent
Using binomial expansion of $\left[ e^{ax+\frac{b}{2}x^{2}}-1\right]^{j},$
we obtain
\begin{eqnarray*}
\mu _{r}^{{\acute{}}} & = & \sum_{i=0}^{\infty } \sum_{j=0}^{\infty } \sum^{j}_{k=0} \tbinom{\beta -1}{i}\tbinom{j}{k} \left( -1\right)^{i+j+k}\frac{\beta \alpha ^{j+1}(i+1)^{j}}{j!} \times \\
& &
\hspace{2.7cm}
\int^{\infty }_{0} x^{r}(a+bx)e^{a(j-k+1)x}\ e^{\frac{b}{2}%
(j-k+1)x^{2}}dx.
\end{eqnarray*}

\noindent
Using series expansion of $e^{\frac{b}{2}(j-k+1)x^2},$ we obtain
\begin{eqnarray*}
&&\left. \mu _{r}^{{\acute{}} }=\sum_{i=0}^{\infty }\sum_{j=0}^{\infty }\sum^{j}_{k=0} \sum_{L=0}^{\infty } \tbinom{\beta -1}{i}\tbinom{j}{k}\left( -1\right)^{i+j+k}\frac{\beta \alpha ^{j+1}b^{L}(i+1)^{j}(j-k+1)^{L}}{j!L!2^{L}}\right. \times \\
&&
\hspace{2cm}
\left[ a \int^{\infty }_{0} x^{r+2L}  e^{a(i-k+1)x}dx+b
\int^{\infty }_{0} x^{r+2L+1}\ e^{a(i-k+1)x}dx \right] .
\end{eqnarray*}

\noindent
By using the definition of gamma function in the form, Zwillinger \cite{14},
\begin{equation*}
\Gamma (z) = x^z \int^{\infty }_{0} e^{tx}\ t^{z-1}dt, \; z, \; x>0.
\end{equation*}

\noindent
Finally, we obtain the rth moment of OGE-LFR in the form
\begin{eqnarray*}
\mu _{r}^{{\acute{}}} & = & \sum_{i=0}^{\infty } \sum_{j=0}^{\infty } \sum^{j}_{k=0} \sum_{L=0}^{\infty } \tbinom{\beta-1}{i}\tbinom{j}{k}(-1 )^{i+j+k}\frac{ \beta \alpha ^{j+1} b^{L}(i+1)^{j}(j-k+1)^{L}}{j!L!2^{L}}
\times \\
&&
\hspace{2cm}
\left[ \frac{(r+2L)!}{a^{r+2L}(j-k+1)^{r+2L+1}}+\frac{b(r+2L+1)!}{a^{r+2L+2}(j-k+1)^{r+2L+2}}\right].
\end{eqnarray*}

\noindent
This completes the proof.
\end{proof}


\section{Order Statistics}
Let X$_{1:n}, X_{2:n}, \cdots, X_{n:n}$ denote the order statistics obtained from a random sample $X_1, X_2, \cdots, X_n$  which taken from a continuous population with cumulative distribution function (cdf) $F(x,\Psi )$ and probability density function (pdf) $f(x,\Psi )$, then the probability density function of $X_{r:n}$ is given by
\begin{equation} \label{4-1}
f_{r:n}(x,\Psi )=\frac{1}{B(r,n-r+1)}\left[ F(x,\Psi )\right] ^{r-1}\left[
1-F(x,\Psi )\right] ^{n-r}f(x,\Psi ),
\end{equation}

\noindent
where $f(x,\Psi ),\ F(x,\Psi )$ are the pdf and cdf of OGE-LFR($\Psi $) distribution given by (\ref{21}) and (\ref{22}) respectively and $B(.,.)$ is the beta function, also we define first order statistics $X_{1:n}= \min(X_1 ,X 2,\cdots, X_n)$, and the last order statistics as $X_{n:n}= \max(X_1 ,X_2, \cdots, X_n)$. Since $0 < F(x,\Psi )< 1$ for $x > 0$, we can use the binomial expansion of $\left[ 1-F(x,\Psi )\right]^{n-r}$\ given as follows
\begin{equation} \label{4-2}
\left[ 1-F(x,\Psi )\right] ^{n-r}=\sum_{i=0}^{n-r} \binom{n-r}{i}(-1)^{i}\left[ F(x,\Psi )\right] ^{i}.
\end{equation}

\noindent
Substituting from (\ref{4-2}) into (\ref{4-1}), we obtain
\begin{equation} \label{4-3}
f_{r:n}(x,\Psi )=\frac{1}{B(r,n-r+1)}f(x;\Psi )\sum_{i=0}^{n-r}
\binom{n-r}{i}(-1)^{i}\left[ F(x,\Psi )\right] ^{i+r-1}.
\end{equation}

\noindent
Substituting from (\ref{2-1}) and (\ref{2-2}) into (\ref{4-3}), we obtain
\begin{equation} \label{4-4}
f_{r:n}(x;\alpha ,a,b,\beta )=\sum_{i=0}^{n-r}
\frac{(-1)^{i}n!}{i!(r-1)!(n-r-i)!(r+i)}f(\alpha ,a,b,(r+i)\beta ).
\end{equation}

\noindent
Relation (\ref{4-4}) shows that $f_{r:n}(x,\Psi )$ is the weighted average of the odd generalized exponential-linear failure rate with different shape parameters.

\section{Estimation and Inference}
Now, we discuss the estimation of the OGE-LFR($\alpha, a, b, \beta $) parameters by using the method of maximum likelihood based on a complete sample.

\subsection{Maximum likelihood estimators}
Let $X_1, X_2, \cdots, X_n$  be a random sample of size n from $X \sim$ OGE-LFR($\alpha,a, b, \beta $) with observed values $x_1, x_2, \cdots, x_n$, then the log-likelihood function can be written as
\begin{equation} \label{5-1}
\mathcal{L }=\prod^n_{i=1} f(x_i; \alpha,a,b,\beta ).
\end{equation}

\noindent
Substituting from (\ref{2-2}) into (\ref{5-1}), we get
\begin{equation*}
\mathcal{L } = \prod^{n}_{i=1} \alpha \beta
\left[ a+bx_{i}\right] e^{ax_{i}+\frac{b}{2}x_{i}^{2}}e^{-\alpha \left[
e^{ax_{i}+\frac{b}{2}x_{i}^{2}}-1\right] }\left[ 1-e^{-\alpha \left[
e^{ax_{i}+\frac{b}{2}x_{i}^{2}}-1\right] }\right] ^{\beta -1}.
\end{equation*}

\noindent
The log-likelihood function can be written as
\begin{eqnarray} \label{5-2}
L & = & n \ln(\alpha )+n \ln (\beta )+\sum^n_{i=1} \ln \left[ a+bx_{i}\right] +\sum^{n}_{i=1} \left[ ax_{i}+\frac{b}{2}x_{i}^{2}\right] -
\nonumber\\
&&
\alpha \sum^n_{i=1} \left[e^{ax_{i}+\frac{b}{2}x_{i}^{2}}-1\right]
+(\beta -1)\sum^n_{i=1} \ln \left[ 1-e^{-\alpha \left( e^{ax_{i}+\frac{b}{2}x_{i}^{2}}-1\right) }\right].
\end{eqnarray}

\noindent
The maximum likelihood estimates of the parameters are obtained by Differentiating the log-likelihood function L with respect to the parameters $\alpha, a, b$ and $\beta$ and setting the result to zero
\begin{eqnarray} \label{5-3}
\frac{\partial L}{\partial \beta }& = &\frac{n}{\beta }+\sum^n_{i=1} \ln \left[ 1-e^{-\alpha \left( e^{ax_{i}+\frac{b}{2}x_{i}^{2}}-1\right) }\right] =0,
\\ \label{5-4}
\frac{\partial L}{\partial \alpha }& = &\frac{n}{\alpha }-\sum^n_{i=1}
\left[ \varphi (x_{i},a,b)-1\right] +(\beta -1)\sum^{n}_{i=1} \frac{\left[ \varphi (x_{i},a,b)-1\right] }{\psi (x_{i},\alpha ,a,b)}=0,
\\ \label{5-5}
\frac{\partial L}{\partial a}& =&\sum^n_{i=1} \frac{1}{a+bx_{i}}+\sum^{n}_{i=1}x_{i}-\alpha \sum^n_{i=1} \varphi (x_{i},a,b)x_{i}+(\beta -1)\alpha \sum^n_{i=1} \frac{\varphi (x_{i},a,b)x_{i}}{\psi (x_{i},\alpha
,a,b)}=0,
\nonumber\\ &&
\end{eqnarray}

\begin{equation} \label{5-6}
\frac{\partial L}{\partial b}=\sum^n_{i=1} \frac{x}{a+bx_{i}}+\frac{1}{2}\sum^n_{i=1} x_{i}^{2}-\frac{\alpha }{2}\sum^n_{i=1} \varphi (x_{i},a,b)x_{i}^{2}+ \frac{(\beta -1)\alpha }{2}\sum^n_{i=1} \frac{\varphi
(x_{i},a,b)x_{i}^{2}}{\psi (x_{i},\alpha ,a,b)}=0,
\end{equation}

\noindent
Where the nonlinear functions $\psi (x_i,\alpha ,a,b)$ and $\varphi (x_i,a,b)$ are given by
\begin{eqnarray*}
\varphi (x_{i},a,b) & = &e^{ax_{i}+\frac{b}{2}x_{i}^{2}},
\\
\psi (x_{i},\alpha ,a,b)& = & e^{\alpha \left[ e^{ax_{i}+\frac{b}{2}x_{i}^{2}}-1\right] }-1.
\end{eqnarray*}

\noindent
From equation (\ref{5-3}), we obtain the maximum likelihood estimate of $\beta$ in a closed form as follow
\begin{equation} \label{5-7}
\hat{\beta}=\frac{-n}{\sum^n_{i=1} \ln \left[
1-e^{-\alpha \left[ e^{ax_{i}+\frac{b}{2}x_{i}^{2}}-1\right] }\right] }.
\end{equation}

\noindent
Substituting from (\ref{5-7}) into (\ref{5-4}), (\ref{5-5}) and (\ref{5-6}), we get the MLEs of $\alpha, a, b$ by solving the following system of non-linear equations
\begin{eqnarray*}
\frac{n}{\hat{\alpha}}-\sum^n_{i=1} \left[ \varphi
(x_{i},\hat{a},\hat{b})-1\right] +(\hat{\beta}-1)\sum^n_{i=1}\frac{\left[ \varphi (x_{i},\hat{a},\hat{b})-1\right] }{\psi (x_{i}, \hat{\alpha},\hat{a},\hat{b})}& = &0,
\\
\sum^n_{i=1}\frac{1}{\hat{a}+\hat{b}x_{i}}+\sum^n_{i=1} x_{i}-\hat{\alpha}\sum^n_{i=1} \varphi (x_{i},\hat{a},\hat{b})x_{i}+(\hat{\beta}-1)\hat{\alpha}\sum^n_{i=1} \frac{\varphi (x_{i},\hat{a},\hat{b})x_{i}}{\psi
(x_{i},\hat{\alpha},\hat{a},\hat{b})} & = &0,
\\
\sum^n_{i=1}\frac{x}{\hat{a}+\hat{b}x_{i}}+\frac{1}{2}
\sum^n_{i=1} x_{i}^{2}-\frac{\hat{\alpha}}{2}\sum^n_{i=1} \varphi (x_{i},\hat{a},\hat{b})x_{i}^{2}+\frac{(\hat{\beta}-1)\hat{\alpha}}{2}\sum^n_{i=1}\frac{\varphi (x_{i},\hat{a},\hat{b})x_{i}^{2}}{\psi (x_{i},\hat{\alpha},\hat{a},%
\hat{b})}& = &0.
\end{eqnarray*}

\noindent
There is no closed form solution to these equations, so statistical software or numerical technique must be applied.

\subsection{Asymptotic confidence bounds}
In this subsection, we derive the asymptotic confidence intervals of the unknown parameters $\alpha, a, b$ and $\beta.$ As the sample size $n\longrightarrow \infty,$ then $(\hat{\alpha}-\alpha,\hat{a}-a,\hat{b}-b,\hat{\beta}-\beta )$ approaches a multivariate normal vector with zero means and the variance $I_{0}^{-1}(\hat{\alpha},\hat{a},\hat{b},\hat{\beta}),$
where $I_0^{-1}(\hat{\alpha},\hat{a},\hat{b},\hat{\beta})$ is the inverse of the observed information matrix which defined as follows
\begin{eqnarray}
I_0^{-1}  & = & - \left[
\begin{array}{cccc}
\frac{\partial ^{2}}{\partial \alpha ^{2}} & \frac{\partial ^{2}}{\partial
\alpha \partial a} & \frac{\partial ^{2}}{\partial \alpha \partial b} &
\frac{\partial ^{2}}{\partial \alpha \partial \beta }
\\
\frac{\partial ^{2}}{\partial a\partial \alpha } & \frac{\partial ^{2}}{%
\partial a^{2}} & \frac{\partial ^{2}}{\partial a\partial b} & \frac{%
\partial ^{2}}{\partial a\partial \beta }
\\
\frac{\partial ^{2}}{\partial b\partial \alpha } & \frac{\partial ^{2}}{%
\partial b\partial a} & \frac{\partial ^{2}}{\partial b^{2}} & \frac{%
\partial ^{2}}{\partial b\partial \beta }
\\
\frac{\partial ^{2}}{\partial \beta \partial \alpha } & \frac{\partial ^{2}}{\partial \beta \partial a} & \frac{\partial ^{2}}{\partial \beta \partial b} & \frac{\partial ^{2}}{\partial \beta ^{2}}
\end{array}
\right]^{-1} 
\nonumber\\ \label{5-8}
 & = &  \left[
\begin{array}{cccc}
Var(\hat{\alpha}) & cov(\hat{\alpha},\hat{a}) & cov(\hat{\alpha},\hat{b}) &
cov(\hat{\alpha},\hat{\beta})
\\
cov(\hat{a},\hat{\alpha}) & Var(\hat{a}) & cov(\hat{a},\hat{b}) & cov(\hat{a},\hat{\beta})
\\
cov(\hat{b},\hat{\alpha}) & cov(\hat{b},\hat{a}) & Var(\hat{b}) & cov(\hat{b},\hat{\beta})
\\
cov(\hat{\beta},\hat{\alpha}) & cov(\hat{\beta},\hat{a}) & cov(\hat{\beta},
\hat{b}) & Var(\hat{\beta})
\end{array}
\right].
\end{eqnarray}

\noindent
The second partial derivatives included in $I_0^{-1}$ are given as follows
\begin{eqnarray*}
\frac{\partial ^{2}L}{\partial \beta ^{2}} & = & \frac{-n}{\beta ^{2}},
\hspace{3.6cm}
\frac{\partial ^{2}L}{\partial \beta \partial \alpha }=\sum^n_{i=1}\frac{\varphi (x_{i},a,b)-1}{\psi (x_{i},\alpha ,a,b)},
\\
\frac{\partial ^{2}L}{\partial \beta \partial a} & = & \alpha \sum^n_{i=1}\frac{x_{i}\varphi (x_{i},a,b)}{\psi (x_{i},\alpha ,a,b)},%
\hspace{1.5cm}
\frac{\partial ^{2}L}{\partial \beta \partial b}=\frac{\alpha }{2 }\sum^n_{i=1} \frac{x_{i}^{2}\varphi (x_{i},a,b)}{\psi
(x_{i},\alpha ,a,b)},
\\
\frac{\partial ^{2}L}{\partial \alpha ^{2}}& = &\frac{-n}{\alpha ^{2}}-(\beta -1)\sum^n_{i=1} \frac{\left[ \varphi (x_{i},a,b)-1\right]
^{2}\left[ \psi (x_{i},\alpha ,a,b)+1\right] }{\left[ \psi (x_{i},\alpha
,a,b)\right] ^{2}},
\\
\frac{\partial ^{2}L}{\partial \alpha \partial a}& =&-\sum^n_{i=1} x_{i}\varphi (x_{i},a,b)+(\beta -1)\sum^n_{i=1} \frac{x_{i}\varphi (x_{i},a,b)h(x_{i},\alpha ,a,b)}{\left[ \psi
(x_{i},\alpha ,a,b)\right] ^{2}},
\\
\frac{\partial ^{2}L}{\partial \alpha \partial b} &= &-\frac{1}{2}\sum^n_{i=1} x_{i}^{2}\varphi (x_{i},a,b)+\frac{(\beta -1)}{2} \sum^n_{i=1}\frac{x_{i}^{2}\varphi (x_{i},a,b)h(x_{i},\alpha ,a,b)}{\left[ \psi (x_{i},\alpha ,a,b)\right] ^{2}},
\\
\frac{\partial ^{2}L}{\partial a^{2}} & = & -\sum^n_{i=1}
\frac{1}{\left( a+bx_{i}\right) ^{2}}-\alpha \sum^n_{i=1} x_{i}^{2}\varphi (x_{i},a,b)+(\beta -1)\alpha \times
\\
& &
\hspace{3cm} \sum^n_{i=1} \frac{x_{i}^{2}\varphi (x_{i},a,b)\tau (x_{i},\alpha ,a,b)}{\left[ \psi (x_{i},\alpha ,a,b)\right] ^{2}},
\\
\frac{\partial ^{2}L}{\partial a\partial b}& = &-\sum^n_{i=1} \frac{x_{i}}{\left( a+bx_{i}\right) ^{2}}-\frac{\alpha }{2}\sum^n_{i=1} x_{i}^{3}\varphi (x_{i},a,b)+\frac{(\beta -1)\alpha }{2}
\times
\\
&&
\hspace{3cm}
\sum^n_{i=1}\frac{x_{i}^{3}\varphi
(x_{i},a,b)\tau (x_{i},\alpha ,a,b)}{\left[ \psi (x_{i},\alpha ,a,b)\right]
^{2}},
\\
\frac{\partial ^{2}L}{\partial b^{2}} & = &-\sum^n_{i=1}
\frac{x_{i}^{2}}{\left( a+bx_{i}\right) ^{2}}-\frac{\alpha }{4}\sum^n_{i=1}x_{i}^{4}\varphi (x_{i},a,b)+\frac{(\beta -1)\alpha }{4}
\\
&&
\hspace{3cm}
\sum^n_{i=1}\frac{x_{i}^{4}\varphi (x_{i},a,b)\tau
(x_{i},\alpha ,a,b)}{\left[ \psi (x_{i},\alpha ,a,b)\right] ^{2}},
\end{eqnarray*}

\noindent
where the nonlinear functions $\psi (x_i,\alpha ,a,b),$ $\varphi(x_i,a,b),$ $h(x_i,\alpha ,a,b)$ and $\tau (x_i,\alpha ,a,b)$ are given by
\begin{equation*}
\varphi (x_{i},a,b)=e^{ax_{i}+\frac{b}{2}x_{i}^{2}},\qquad  \psi
(x_{i},\alpha ,a,b)=e^{\alpha \left( e^{ax_{i}+\frac{b}{2}x_{i}^{2}}-1\right)}-1,
\end{equation*}
\begin{equation*}
h(x_{i},\alpha ,a,b)=e^{\alpha \left( e^{ax_i+\frac{b}{2}x_i^{2}}-1 \right) }\left[1-\alpha \left( e^{ax_{i}+\frac{b}{2}x_i^{2}}-1\right) \right]-1,
\end{equation*}
\begin{equation*}
\tau (x_{i},\alpha ,a,b)=e^{\alpha \left(e^{ax_i+\frac{b}{2}x_i^2}-1
\right) }\left[1-\alpha e^{ax_i+\frac{b}{2}x_i^2}\right]-1.
\end{equation*}

\noindent
The above approach is used to derive the $(1-\delta) 100\%$ confidence intervals for the parameters $\alpha, a,b $ and $\beta $  as in the following forms
\begin{equation*}
\hat{\alpha}\pm Z_{\frac{\delta }{2}}\sqrt{Var(\hat{\alpha})}, \; \hat{a}\pm Z_{\frac{\delta }{2}}\sqrt{Var(\hat{a})}, \; \hat{b}\pm Z_{\frac{\delta }{2}}\sqrt{Var(\hat{b})}, \; \hat{\beta}\pm Z_{\frac{\delta }{2}}\sqrt{Var(\hat{\beta})},
\end{equation*}

\noindent
where $Z_{\frac{\delta }{2}}$ is the upper ($\frac{\delta}{2}$)th percentile of the standard normal distribution.

\section{Data Analysis}
Now we use a real data set to show that the OGE-LFR distribution can be a better model, comparing with many known distributions such as the Exponential(E), Generalized Exponential(GE), Linear Failure Rate(LFR), New Generalized Linear Exponential (NGLE) and Transmuted Linear Exponential (TLE). Consider the data have been obtained from Aarset \cite{1}, and widely reported in many literatures. It represents the lifetimes of 50 devices, and also, possess a bathtub-shaped failure rate property, Table 1.

\begin{center}
Table 1: The data from Aarset \cite{1}.\\
\begin{tabular}{rrrrrrrrrrrrrr} \hline
0.1 & 0.2 & 1 & 1 & 1 & 1 & 1 & 2 & 3 & 6 & 7 & 11 & 12 & 18 \\
18 & 18 & 18 & 18 & 21 & 32 & 36 & 40 & 45 & 46 & 47 & 50 & 55 & 60 \\
63 & 63 & 67 & 67 & 67 & 67  &  72 & 75 & 79 & 82 & 82 & 83 & 84 & 84 \\
84 & 85 & 85  & 85 & 85 & 85 & 86 &  86\\
\hline
\end{tabular}
\end{center}

\noindent
The MLEs of the unknown parameters and the corresponding Kolmogorov--Smirnov(K--S) test statistic for the six models are given in Table 2. 

\begin{center}
Table 2: The MLES of the parameters, the K--S values and p-values.\\
\begin{tabular}{llcc} \hline
The  model & MLE of the parameters &  K--S & P-value(K-S)\\ \hline
E & $\hat{\alpha}$ = 0.0219 &  0.1911  & 0.0519 \\  \hline
GE & $\hat{\alpha}$ = 0.0212, $\hat{\beta}$ = 0.9012 & 0.1940 &  0.0514 \\  \hline
LFR &  $\hat{a}$ = 0.014, $\hat{b}$ = 2.4$\times 10^{-4}$ & 0.1955 & 0.0370
\\ \hline
NGLE &
\begin{tabular}{l}
$\hat{a}$ = 0.0012, $\hat{b}$ = 0.0127, \\
$\hat{c}$ =1.0682, $\hat{\beta}$ = 0.7231\\
\end{tabular}  & 0.2030 & 0.0276 \\
 \hline
TLE &
\begin{tabular}{l}
$\hat{a}$ = 0.0145, $\hat{b}$ = 2.4186 $\times 10^{-4},$ \\
 $\hat{\lambda}$ = -0.0948
 \end{tabular}  & 0.1740 & 0.0855 \\  \hline
OGE-LFR &
\begin{tabular}{l}
$\hat{\alpha}$ = 472.404, $\hat{a}$ = 8.218 $\times 10^{-6}$,
\\
$\hat{b}$ = 6.427 $\times 10^{-7},$ $\hat{\beta}$ = 0.529
\end{tabular} & 0.1627 & 0.12830 \\ \hline
\end{tabular}
\end{center}

\noindent
The values of the log-likelihood functions (-L), AIC (Akaike Information Criterion), the statistics AICC (Akaike Information Citerion with correction), BIC (Bayesian Information Criterion) and HQIC (Hannan-Quinn information criterion) are calculated in Table 3 for the six distributions in order to verify which distribution fits better to these data.

\noindent
\begin{center}
Table 3: The --L, AIC, AICC, BIC and HQIC for devices data.\\
\begin{tabular}{llllll} \hline
The model & --L & AIC & AICC & BIC & HQAIC \\  \hline
E & 241.090 & 484.1792 & 484.2625 & 486.0912 & 484.908 \\
GE & 240.3855 & 484.7710 & 485.0264 & 488.5951 & 486.227 \\
LFR & 238.064 & 480.128 & 480.383 & 483.952 & 481.584 \\
NGLE & 239.49 & 486.98 & 487.869 & 494.6281 & 489.892 \\
TLE & 238.01 & 482.02 & 482.54 & 487.756 & 484.204 \\
OGE-LFR & 232.865 & 473.730 & 474.618 & 481.378 & 476.642 \\ \hline
\end{tabular}
\end{center}

\noindent
Based on Tables 2 and 3, it is shown that OGE-LFR($\alpha, a, b, \beta $) model provide better fit to the data rather than other distributions which we compared with because it has the smallest value of (K-S), AIC, AICC, BIC and HQIC test.\\

\noindent
To show that the likelihood equation have unique solution, we plot the profiles of the log-likelihood function of $\alpha, a, b$ and $\beta $ in Figures 5-6.
\begin{figure}[htp]
\centering
\includegraphics[width=0.37\textwidth]{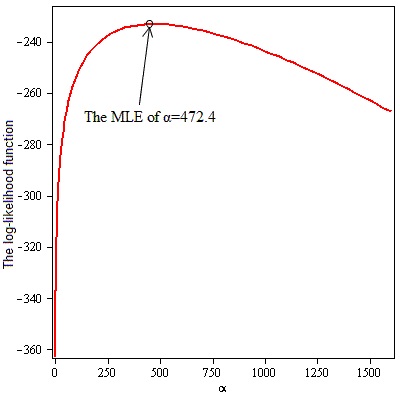}
\includegraphics[width=0.4\textwidth]{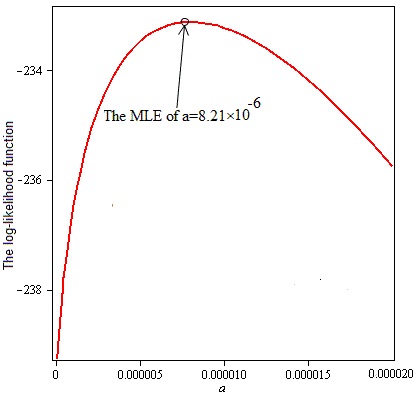}
\caption{The profile of the log-likelihood function of $\alpha, a$.}
\label{fig5}
\end{figure}

\begin{figure}[htp]
\centering
\includegraphics[width=0.37\textwidth]{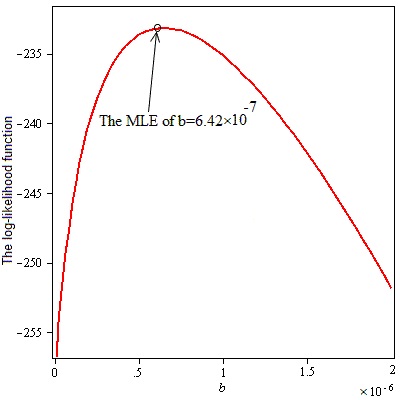}
\includegraphics[width=0.37\textwidth]{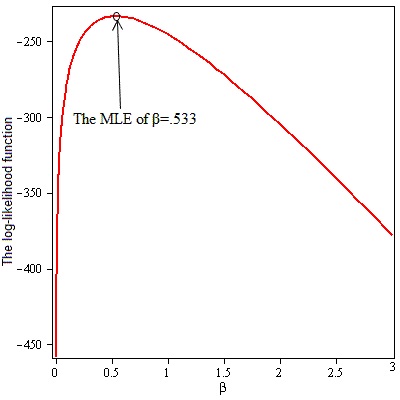}
\caption{The profile of the log-likelihood function of $b, \beta$.}
\label{fig6}
\end{figure}

\noindent
The nonparametric estimate of the survival function using the Kaplan-Meier method and its fitted parametric estimations when the distributions is assumed to be E, GE, LFR, NGLE, TLE and OGE-LFR are computed and plotted in Figure 7.
\begin{figure}[htp]
\centering
\includegraphics[width=0.5\textwidth]{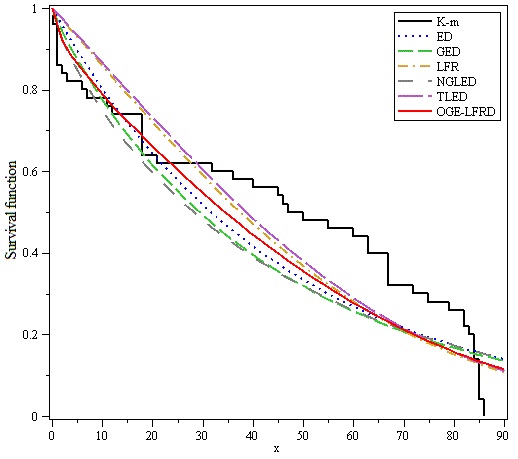}
\caption{The Kaplan-Meier estimate of survival function and fitted survival functions.}
\label{fig7}
\end{figure}

\noindent
Figures 8 and 9, give the form of the probability density functions and the hazard functions for the ED, GED, LFRD, NGLED, TLED, OGE-LFRD distributions which are used to fit the data after replacing the unknown parameters included in each distribution by their MLE.

\begin{figure}[htp]
\centering
\includegraphics[width=0.5\textwidth]{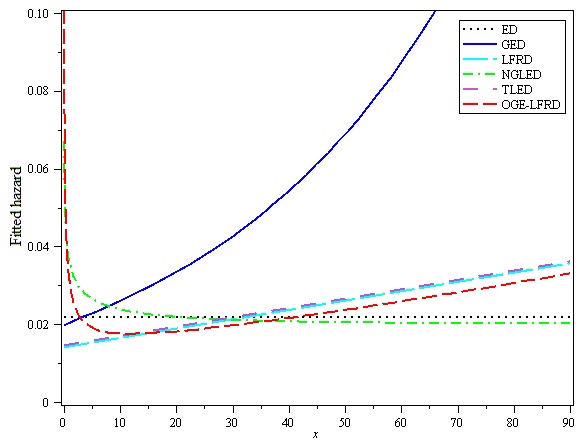}
\caption{The Fitted hazard functions for the data.}
\label{fig7}
\end{figure}

\begin{figure}[htp]
\centering
\includegraphics[width=0.5\textwidth]{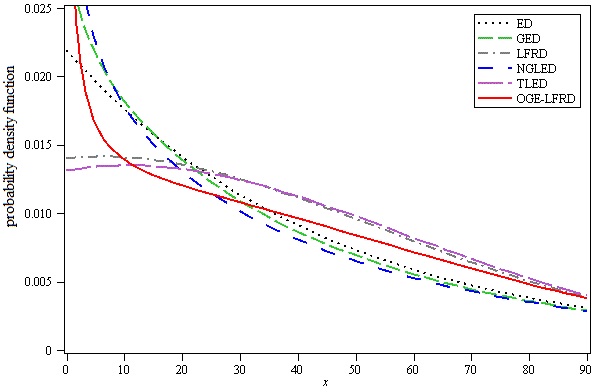}
\caption{The Fitted probability density functions for the data.}
\label{fig8}
\end{figure}

\newpage
\section{Conclusions}
In this article, we have introduced a new four-parameter model called odd generalized exponential linear failure rate distribution. We refer to the new model as the OGE-LFR distribution and study some of its mathematical and statistical properties. We provide the pdf, the cdf, the hazard rate function and the reversed hazard function for the new model also we provide an explicit expression for the moments. The model parameters are estimated by maximum likelihood method. We use application on set of real data to compare the OGE-LFR with other known distributions such as Exponential (E), Generalized Exponential (GE), Linear Failure Rate (LFR), New Generalized Linear Exponential (NGLE) and Transmuted Linear Exponential (TLE). Applications on set of real data showed that the OGE-LFR is the best distribution for fitting these data sets compared with ED, GED, LFRD, NGLED and TLED distributions.


\begin{thebibliography}{99}
\bibitem{1}
Aarset, M. V. (1987). How to identify a bathtub hazard rate. IEEE Transactions on Reliability 36(1), 106--108.

\bibitem{2}
Bain, L. J. (1974). Analysis for the linear failure-rate life-testing distribution. Technometrics, 16(4), 551--559.

\bibitem{3}
El-Damcese, M. A., Mustafa, A., El-Desouky, B. S., \& Mustafa, M. E. (2015). The odd generalized exponential gompertz. arXiv preprint arXiv:1507.06400.

\bibitem{4}
Gupta, R. D., \& Kundu, D. (2007). Generalized exponential distribution: Existing results and some recent developments. Journal of Statistical Planning and Inference, 137(11), 3537--3547.

\bibitem{5}
Gupta, R. D., \& Kundu, D. (1999). Theory \& methods: Generalized exponential distributions. Australian \& New Zealand Journal of Statistics, 41(2), 173--188.

\bibitem{6}  Jamkhaneh, E. (2014). Modified generalized linear failure rate distribution: Properties and reliability analysis. International Journal of Industrial Engineering Computations, 5(3), 375--386.

\bibitem{7}
Jafari, A. A., \& Mahmoudi, E. (2012). Beta-linear failure rate distribution and its applications. arXiv preprint arXiv:1212.5615.

\bibitem{8}
Mudholkar, G. S., Srivastava, D. K., \& Freimer, M. (1995). The exponentiated Weibull family: a reanalysis of the bus-motor-failure data. Technometrics, 37(4), 436--445.

\bibitem{9}
Mahmoud, M. A. W., \& Alam, F. M. A. (2010). The generalized linear exponential distribution. Statistics \& probability letters, 80(11), 1005--1014.

\bibitem{10}
Sarhan, A. M., \& Kundu, D. (2009). Generalized linear failure rate distribution. Communications in Statistics -- Theory and Methods, 38(5), 642--660.

\bibitem{11}
Tian, Y. Z., Tian, M. Z., \& Zhu, Q. Q. (2014). A new generalized linear exponential distribution and its applications. Acta Mathematicae Applicatae Sinica, English Series, 30(4), 1049--1062.

\bibitem{12}
Tian, Y., Tian, M., \& Zhu, Q. (2014). Transmuted linear exponential distribution: A new generalization of the linear exponential distribution. Communications in Statistics-Simulation and Computation, 43(10), 2661--2677.

\bibitem{13}
Tahir, M. H., Cordeiro, G. M., Alizadeh, M., Mansoor, M., Zubair, M., \& Hamedani, G. G. (2015). The odd generalized exponential family of distributions with applications. Journal of Statistical Distributions and Applications, 2(1), 1--28.

\bibitem{14} Zwillinger, D. (2014). Table of integrals, series, and products. Elsevier.
\end{thebibliography}
\end{document}